\theoremstyle{plain}
\newtheorem{them}{Theorem}[section]
\newtheorem{lemma}[them]{Lemma}
\newtheorem{coro}[them]{Corollary}
\theoremstyle{definition}
\newtheorem{conj}[them]{Conjecture}
\newtheorem*{mthem}{Main Theorem}
\begin{document}

\title{The zeta function of a finite category and the series Euler characteristic}
\author{Kazunori Noguchi \thanks{noguchi@math.shinshu-u.ac.jp}}
\date{}
\maketitle
\begin{abstract}
We prove that a certain conjecture holds true and the conjecture states a relationship between the zeta function of a finite category and the Euler characteristic of a finite category.
\end{abstract}

\footnote[0]{Key words and phrases. the zeta function of a finite category, the Euler characteristic of categories.  \\ 2010 Mathematics Subject Classification :  18G30 }

\thispagestyle{empty}

\section{Introduction}

In \cite{NogA}, the zeta function of a finite category was defined and one conjecture was proposed. The zeta function of a finite category $I$ is the formal power series defined by $$\zeta_I(z)=\exp\left( \sum_{m=1}^{\infty} \frac{\# N_m(I)}{m} z^m\right)$$
where $$N_m(I)=\{\xymatrix{(x_0\ar[r]^{f_1}&x_1\ar[r]^{f_2}&\dots\ar[r]^{f_m}&x_m)} \text{ in } I \}.$$ The conjecture states a relationship between the zeta function of a finite category and the Euler characteristic of a finite category, called \textit{series Euler characteristic} \cite{Leib}. 

\begin{conj}
Suppose $I$ is a finite category which has series Euler characteristic. Then, we have
\renewcommand{\theenumi}{C\arabic{enumi}}
\renewcommand{\labelenumi}{(\theenumi)}
\begin{enumerate}
\item the zeta function of $I$ is a finite product of the following form $$\zeta_I(z)=\prod \frac{1}{(1-\alpha_iz)^{\beta_i}}\exp\left( \sum \frac{\gamma_j z^j}{j(1-\delta_j z)^j}\right)$$
for some complex numbers $\alpha_i, \beta_i, \gamma_j,\delta_j $.
\item  $\displaystyle \sum \beta_i$ is the number of objects of $I$. 
\item each $\alpha_i$ is an eigen value of $A_I$. Hence, $\alpha_i$ is an algebraic integer.
\item $\displaystyle \sum \frac{\beta_i}{\alpha_i} +\sum (-1)^j\frac{\gamma_j}{\delta_j^{j+1}}=\chi_{\Sigma}(I).$
\end{enumerate}
\end{conj}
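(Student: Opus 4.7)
The plan is to reduce everything to linear algebra via the identity $\#N_m(I) = u^T A_I^m u$, where $A_I$ is the matrix with $(A_I)_{x,y} = \#\Hom_I(x,y)$ and $u = (1,\ldots,1)^T$, and then to exploit the Jordan normal form of $A_I$ together with the two power-series identities
\[
\sum_{m=1}^\infty \frac{\lambda^m z^m}{m} = -\log(1-\lambda z), \qquad \sum_{m=1}^\infty \binom{m}{j}\frac{\lambda^{m-j}z^m}{m} = \frac{z^j}{j(1-\lambda z)^j}\ \ (j\geq 1).
\]

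First, write $A_I = P J P^{-1}$ with $J = \bigoplus_s J_{k_s}(\lambda_s)$ a direct sum of Jordan blocks. Using $J_k(\lambda)^m = \sum_{j=0}^{k-1}\binom{m}{j}\lambda^{m-j}N^j$ for the nilpotent shift $N$, the number $u^T A_I^m u$ becomes a finite sum $\sum_{s,j} c_{s,j}\binom{m}{j}\lambda_s^{m-j}$, where each coefficient $c_{s,j}$ is a bilinear pairing between appropriate entries of $u^T P$ and $P^{-1} u$ restricted to the $s$-th block. Substituting this into the exponent $\sum_{m\geq 1}\#N_m(I)z^m/m$ of $\zeta_I(z)$ and applying the two identities above, the $j=0$ contributions exponentiate to the product in (C1) and the $j\geq 1$ contributions exponentiate to the exponential factor. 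The $\alpha_i$ and $\delta_j$ arise as eigenvalues of $A_I$, so (C3) is automatic since eigenvalues of an integer matrix are algebraic integers.

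For (C2), set $m=0$ in the Jordan expansion: all $j\geq 1$ terms vanish because $\binom{0}{j}=0$, leaving $\sum_s c_{s,0} = u^T P P^{-1} u = n$, which is exactly the sum of the exponents $\beta_i$ coming from the $j=0$ contributions. For (C4), I would compute $u^T A_I^{-1} u$ using the fact that the $j$-th superdiagonal of $J_k(\lambda)^{-1}$ has entries $(-1)^j/\lambda^{j+1}$; the same bilinear pairings $c_{s,j}$ reappear and give $u^T A_I^{-1} u = \sum_{s,j}(-1)^j c_{s,j}/\lambda_s^{j+1}$, which is precisely the left-hand side of (C4). It then remains to identify $u^T A_I^{-1} u$ with $\chi_\Sigma(I)$, which comes from the definition of the series Euler characteristic as the evaluation at $t=-1$ of the generating function $\sum_{n\geq 0} t^n\,u^T(A_I-\mathrm{Id})^n u$.

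The hardest step, I expect, will be the Jordan-block bookkeeping: verifying that the coefficients produced on the $\zeta_I$-side (via exponentiation of the second identity above) match, index-for-index with multiplicities, the coefficients produced on the $\chi_\Sigma$-side (via inversion of the Jordan blocks). A secondary subtlety is that the hypothesis that $I$ has series Euler characteristic need not directly force $A_I$ to be invertible, so the analytic continuation of the defining series, or a limiting argument on the nilpotent part, may have to be invoked in order for both sides of (C4) to be meaningful in the same generality.
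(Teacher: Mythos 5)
Your reduction to $\#N_m(I)=u^TA_I^mu$ and the Jordan normal form of $A_I$ is a genuinely different route from the paper, which instead starts from the closed form $\zeta_I'/\zeta_I=\mathrm{sum}(\mathrm{adj}(E-A_Iz)A_I)/|E-A_Iz|$ (Proposition 2.1 of [NogB]), takes a partial fraction decomposition over the roots $\theta_k$ of $|E-A_Iz|$, and integrates. Your two power-series identities are correct and your bookkeeping does reproduce (C1)--(C4) \emph{when $A_I$ is invertible}; but that is exactly the case already settled in [NogB] (coarse M\"obius inversion), and the whole point of this paper is the singular case. There your plan has a genuine gap, and it is not the "secondary subtlety" you describe: every one of (C1), (C2) and (C4) breaks on the nilpotent Jordan blocks (the generalized $0$-eigenspace of $A_I$). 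Concretely: (i) a block $J_k(0)$ contributes $v_s^TN^mw_s$ to $\#N_m(I)$, which is supported on $m<k$ and hence contributes a \emph{polynomial} to the exponent of $\zeta_I$; this fits the shape of (C1) only with $\delta_j=0$, which makes the expression $\gamma_j/\delta_j^{j+1}$ in (C4) meaningless. (ii) Your $m=0$ evaluation gives $\sum_{\text{all }s}c_{s,0}=u^Tu=N$, but $\sum_i\beta_i$ only collects $c_{s,0}$ over blocks with $\lambda_s\neq0$, so (C2) requires the additional fact that $\sum_{\lambda_s=0}c_{s,0}=0$. (iii) $u^TA_I^{-1}u$ simply does not exist when $A_I$ is singular, while $\chi_\Sigma(I)$ may still exist; the correct identification in general is $\chi_\Sigma(I)=-k_{N-1-s}/d_{N-r}$ (or $0$ if $s>r$), obtained by evaluating $\mathrm{sum}(\mathrm{adj}(A_I-Ez))/|A_I-Ez|$ at $z=0$ after cancelling powers of $z$, not by inverting $A_I$.

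What is missing is precisely the content of the paper's Lemmas \ref{d-lemma}--\ref{mlemma}: writing $|E-A_Iz|=d_0+\dots+d_{N-r}z^{N-r}$ and $\mathrm{sum}(\mathrm{adj}(E-A_Iz))=k_0+\dots+k_{N-1-s}z^{N-1-s}$, the hypothesis that $I$ has series Euler characteristic is shown to be \emph{equivalent} to $s\ge r$, which forces $\deg\big(\mathrm{sum}(\mathrm{adj}(E-A_Iz)A_I)\big)=\deg|E-A_Iz|-1$, i.e.\ the logarithmic derivative of $\zeta_I$ is a proper rational function with poles only at the nonzero $\theta_k$. That is exactly the statement that all the nilpotent contributions in (i)--(iii) vanish, and it is where the hypothesis enters; your "limiting argument on the nilpotent part" has to be replaced by this degree comparison (or an equivalent argument) before the Jordan-form computation proves anything beyond the M\"obius-invertible case. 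The remaining coefficient matching you would then need (identifying $\sum_kA_{k,1}=-Nd_{N-r}$ for (C2) and $m_{N-2-r}$ in terms of $k_{N-1-r}$ and $d_{N-1-r}$ for (C4)) is the paper's Lemma \ref{mlemma} and the final computation in Theorem \ref{noguchi}.
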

It was verified this conjecture holds true under certain additional conditions in \cite{NogA} and \cite{NogB}.

In \cite{NogA}, it was verified the conjecture holds true in concrete cases, that is, when a finite category is a groupoid, an acyclic category and has two objects and so on. An \textit{acyclic} category is a small category in which all endomorphisms and isomorphisms are identity morphisms. In \cite{NogB}, it was verified the conjecture holds true when a finite category has M\"obius inversion. A finite category $I$ has \textit{M\"obius inversion} if its adjacency matrix  $A_I$ has an inverse matrix where $A_I$ is an $N\times N$-matrix whose $(i,j)$-entry is the number of morphisms of $I$ from $x_i$ to $x_j$ when the set of objects of $I$ is $$\mathrm{Ob}(I)=\{x_1,x_2,\dots,x_N\}$$ (see \cite{Leia} and \cite{Leic}). In the sense of Leinster, this is called \textit{coarse M\"obius inversion} \cite{Leic}. The class of finite categories which has coarse M\"obius inversion is large and very important to consider the Euler characteristic of a finite category. Euler characteristic for categories is defined by various ways, \textit{the series Euler characteristic} $\chi_{\sum}$ \cite{Leib}, \textit{the $L^2$-Euler characteristic} $\chi^{(2)}$ \cite{FLS}, \textit{the extended $L^2$-Euler characteristic} $\chi^{(2)}_{\mathrm{ex}}$ \cite{Nog}, \textit{the Euler characteristic of an $\mathbb{N}$-filtered acyclic category} $\chi_{\mathrm{fil}}$ \cite{Nog11} and so on. If a finite category $I$ has the coarse M\"obius inversion, then $I$ has Leinster's Euler characteristic and series Euler characteristic and they coincide, $\chi_L(I)=\chi_{\sum}(I)$. A finite acyclic category $A$ has the coarse M\"obius inversion and all of the Euler characteristic above for $A$ coincide.

In this paper, we prove the conjecture holds true without any additional conditions. The following is our main theorem.

\begin{mthem}
Suppose $I$ has series Euler characteristic and $$\deg(|E-A_Iz|)=N-r$$  and $$\deg(\mathrm{sum}(\mathrm{adj}(E-A_Iz)))=N-1-s$$ and the polynomial $|E-A_Iz|$ is factored to the following form
$$|E-A_Iz|=d_{N-r}(z-\theta_1)^{e_1}\dots(z-\theta_n)^{e_n}$$ where each $e_i\ge 1$ and $\theta_i\not = \theta_j$ if $i\not = j$. Then the rational function $$\frac{\mathrm{sum}(\mathrm{adj}(E-A_I z)A_I)}{ |E-A_I z|}$$ has a partial fraction decomposition to the following form 
$$\frac{\mathrm{sum}(\mathrm{adj}(E-A_I z)A_I)}{ |E-A_I z|}=\frac{1}{d_{N-r} }\sum^n_{k=1}\sum^{e_k}_{j=1}\frac{A_{k,j}}{(z-\theta_k)^j}$$ 
for some complex numbers $A_{k,j}$.
Moreover, 
\begin{enumerate}
\item  Then the zeta function of $I$ is 
\begin{multline*}
\zeta_I(z)=\prod^n_{k=1}\frac{1}{(1-\frac{1}{\theta_k}z)^{-\frac{A_{k,1}}{ d_{N-r}}}}  \\ \times \exp\left( \frac{1}{ d_{N-r}}  \sum^n_{k=1} \sum^{e_k -1}_{j=1} \frac{z^j}{j(1-\frac{1}{\theta_k}z)^j}\bigg( \sum_{i=j}^{e_k-1} \binom{i-1}{j-1} (-1)^{i+1} \bigg(\frac{1}{\theta_k}\bigg)^{i+j} A_{k,i+1}\bigg)\right)
\end{multline*}
\item $\displaystyle \sum^n_{k=1}-\frac{A_{k,1}}{ d_{N-r}}=N$
\item  Each $\frac{1}{\theta_k}$ is an eigen value of $A_I$. In particular, $\frac{1}{\theta_k}$ is an algebraic integer
\item  
\begin{multline*}
\displaystyle \sum^n_{k=1}\frac{-\frac{A_{k,1}}{ d_{N-r}}}{\frac{1}{\theta_k}}\\
+ \frac{1}{ d_{N-r}} \sum^n_{k=1} \sum ^{e_k-1}_{j=1} (-1)^j\frac{ \sum_{i=j}^{e_k-1}  \binom{i-1}{j-1} (-1)^{i-1} \bigg(\frac{1}{\theta_k}\bigg)^{i+j} A_{k,i+1}}{\left(\frac{1}{\theta_k}\right)^{j+1}}=\chi_{\Sigma}(I).
\end{multline*}
\end{enumerate}
\end{mthem}
 If we do not assume the condition that $I$ has series Euler characteristic, the part 1 is given by the following.
 
 \begin{them}[Theorem \ref{any}]
Let $I$ be a finite category. Suppose the polynomial $|E-A_Iz|$ is factored to the following form:
$$|E-A_Iz|=d_{N-r}(z-\theta_1)^{e_1}\dots(z-\theta_n)^{e_n}$$ where $1\le r\le N-1$ each $e_i\ge 1$ and $\theta_i\not = \theta_j$ if $i\not = j$. Suppose 
$$\mathrm{sum}(\mathrm{adj}(E-A_I z)A_I)=q(z)|E-A_I z|+r(z)$$
where $$\deg(r(z))<\deg |E-A_Iz|$$
and $\frac{r(z)}{|E-A_Iz|}$ has a partial fraction decomposition to the following form 
$$\frac{r(z)}{ |E-A_I z|}=\frac{1}{d_{N-r} }\sum^n_{k=1}\sum^{e_k}_{j=1}\frac{A_{k,j}}{(z-\theta_k)^j}.$$ 
Then the zeta function of $I$ is 
\begin{multline*}
\zeta_I(z)=\prod^n_{k=1}\frac{1}{(1-\frac{1}{\theta_k}z)^{-\frac{A_{k,1}}{d_{N-r}}}}  \times \exp\bigg( Q(z)+  \\ \frac{1}{d_{N-r}}  \sum^n_{k=1} \sum^{e_k -1}_{j=1} \frac{z^j}{j(1-\frac{1}{\theta_k}z)^j}\bigg( \sum_{i=j}^{e_k-1}  \binom{i-1}{j-1} (-1)^{i-1} \bigg(\frac{1}{\theta_k}\bigg)^{i+j} A_{k,i+1}\bigg)\bigg)
\end{multline*}
where $Q(z)=\int q(z) dz$ is a polynomial whose constant term is $0$.
\end{them}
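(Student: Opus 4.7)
The plan is to take the logarithmic derivative of $\zeta_I(z)$, integrate the partial-fraction expansion of the result supplied by the hypothesis, and exponentiate. The identity $\#N_m(I)=\mathrm{sum}(A_I^m)$ (immediate from counting composable chains via matrix multiplication, as used in \cite{NogA}) gives
\begin{equation*}
\frac{d}{dz}\log\zeta_I(z)=\sum_{m\ge1}\mathrm{sum}(A_I^m)\,z^{m-1}=\mathrm{sum}\bigl(A_I(E-A_Iz)^{-1}\bigr)=\frac{\mathrm{sum}(\mathrm{adj}(E-A_Iz)A_I)}{|E-A_Iz|},
\end{equation*}
which by hypothesis equals $q(z)+\frac{1}{d_{N-r}}\sum_{k=1}^{n}\sum_{j=1}^{e_k}\frac{A_{k,j}}{(z-\theta_k)^{j}}$.

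Termwise integration produces $Q(z)$, the simple-pole logarithms $A_{k,1}\log(z-\theta_k)=A_{k,1}\log(-\theta_k)+A_{k,1}\log(1-z/\theta_k)$, and for $j\ge 2$ the higher-pole contributions $-A_{k,j}/[(j-1)(z-\theta_k)^{j-1}]$. The normalization $\log\zeta_I(0)=0$, combined with $Q(0)=0$, fixes the constant of integration so as to absorb both the $A_{k,1}\log(-\theta_k)$'s and the values of the higher-pole antiderivatives at $z=0$. Using $(z-\theta_k)^{l}=(-1)^{l}\theta_k^{l}(1-z/\theta_k)^{l}$ to rewrite $\frac{1}{(z-\theta_k)^{l}}-\frac{1}{(-\theta_k)^{l}}=\frac{(-1)^{l}}{\theta_k^{l}}\bigl(\frac{1}{(1-z/\theta_k)^{l}}-1\bigr)$, after substituting $l=j-1$ one reduces $\log\zeta_I(z)$ to
\begin{equation*}
Q(z)+\frac{1}{d_{N-r}}\sum_k A_{k,1}\log(1-z/\theta_k)+\frac{1}{d_{N-r}}\sum_k\sum_{l=1}^{e_k-1}\frac{(-1)^{l+1}A_{k,l+1}}{l\,\theta_k^{l}}\left(\frac{1}{(1-z/\theta_k)^{l}}-1\right).
\end{equation*}

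The conceptual heart of the argument, and the only step that is not routine bookkeeping, is to convert each $\frac{1}{(1-w)^{l}}-1$ into the basis $\{w^{m}/(1-w)^{m}\}_{m=1}^{l}$ demanded by the theorem. The main obstacle is thus spotting the one-line binomial identity
\begin{equation*}
\frac{1}{(1-w)^{l}}-1=\left(1+\frac{w}{1-w}\right)^{l}-1=\sum_{m=1}^{l}\binom{l}{m}\frac{w^{m}}{(1-w)^{m}},
\end{equation*}
which does exactly that. Substituting $w=z/\theta_k$, interchanging the (finite) sums over $l$ and $m$, and applying the elementary identity $\frac{m}{l}\binom{l}{m}=\binom{l-1}{m-1}$, the coefficient of $\frac{z^{m}}{m(1-z/\theta_k)^{m}}$ becomes $\sum_{l=m}^{e_k-1}\binom{l-1}{m-1}(-1)^{l+1}\theta_k^{-(l+m)}A_{k,l+1}$; relabeling $i=l$ and using $(-1)^{l+1}=(-1)^{i-1}$ yields exactly $\sum_{i=m}^{e_k-1}\binom{i-1}{m-1}(-1)^{i-1}\theta_k^{-(i+m)}A_{k,i+1}$, matching the theorem. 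Exponentiating the resulting expression for $\log\zeta_I(z)$ gives the stated formula.
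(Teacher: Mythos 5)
Your proposal is correct and follows essentially the same route as the paper: integrate the partial-fraction expansion of $\frac{\mathrm{sum}(\mathrm{adj}(E-A_Iz)A_I)}{|E-A_Iz|}$, fix the constant by $\zeta_I(0)=1$, and convert the higher-pole terms to the basis $\frac{z^m}{m(1-z/\theta_k)^m}$ via the identity $\frac{1}{(1-w)^{l}}=\bigl(1+\frac{w}{1-w}\bigr)^{l}$ followed by an exchange of sums and $\frac{m}{l}\binom{l}{m}=\binom{l-1}{m-1}$. The only difference is that you derive inline the two ingredients the paper imports as Proposition 2.1 and Lemma 2.7 of [NogB], namely the logarithmic-derivative formula and the binomial rewriting.
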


It is very important to study about behavior of singular points and zeros of a zeta function. By the following corollary, the problem is reduced to investigate properties of roots of $|E-A_I z|$. 
\begin{coro}
Let $I$ be a finite category. A complex number $z$ is a singular point or zero of $\zeta_I$ if and only if $z$ is a root of $|E-A_Iz|$.
\end{coro}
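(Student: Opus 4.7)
The natural route is via the logarithmic derivative of $\zeta_I$. The first step is the identity
\[
\frac{\zeta_I'(z)}{\zeta_I(z)} \;=\; \sum_{m=1}^{\infty} \#N_m(I)\,z^{m-1} \;=\; \mathrm{sum}\bigl(A_I(E-A_Iz)^{-1}\bigr) \;=\; \frac{\mathrm{sum}(\mathrm{adj}(E-A_Iz)\,A_I)}{|E-A_Iz|},
\]
which comes from differentiating $\log\zeta_I$ term by term, using $\#N_m(I)=\mathrm{sum}(A_I^m)$ and the formal matrix expansion $\sum_{m\ge 0} A_I^m z^m=(E-A_Iz)^{-1}$. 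This exhibits $\zeta_I'/\zeta_I$ as a rational function whose only possible poles are roots of $|E-A_Iz|$.

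For the forward implication, I would argue that if $z_0$ is a singular point or a zero of $\zeta_I$ then $\zeta_I'/\zeta_I$ cannot be analytic at $z_0$, so $z_0$ must be a pole of the rational function above; hence $z_0$ is a root of $|E-A_Iz|$.

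For the reverse implication, I plan to invoke the explicit factorization of $\zeta_I$ supplied by Theorem \ref{any}. That formula presents $\zeta_I$ as a product of power factors $(1-z/\theta_k)^{-A_{k,1}/d_{N-r}}$ times an exponential whose argument contains the terms $z^j/(j(1-z/\theta_k)^j)$. For each distinct root $\theta_k$ of $|E-A_Iz|$, these contributions force $\zeta_I$ to have either a zero/pole of finite order or an essential singularity at $z=\theta_k$, so every such root is a singular point or a zero.

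The delicate point I expect to confront is possible cancellation between the numerator $\mathrm{sum}(\mathrm{adj}(E-A_Iz)\,A_I)$ and the denominator $|E-A_Iz|$: should all the partial-fraction coefficients attached to some root $\theta_k$ vanish, then $\theta_k$ would be merely an apparent pole of the logarithmic derivative and a regular non-zero point of $\zeta_I$. Carrying the reverse direction through therefore requires either an argument ruling out such total cancellation for adjacency matrices of finite categories, or the tacit convention that ``root of $|E-A_Iz|$'' in the statement refers to a root that survives in the reduced rational form of $\zeta_I'/\zeta_I$.
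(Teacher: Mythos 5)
Your forward direction is fine and matches the paper in substance: the paper reads it off directly from the closed form of $\zeta_I$ in Theorem \ref{any} (whose proof starts from exactly the identity $\zeta_I'/\zeta_I=\mathrm{sum}(\mathrm{adj}(E-A_Iz)A_I)/|E-A_Iz|$ that you write down). The problem is the reverse direction: you correctly identify the delicate point --- total cancellation of all partial-fraction coefficients attached to a root $\theta_\ell$ --- but then you stop, offering only the alternatives ``rule it out somehow'' or ``reinterpret the statement.'' That is precisely the content of the corollary, so as it stands your proposal has a genuine gap rather than a proof; the statement is about roots of $|E-A_Iz|$ itself, not of the reduced form, and no tacit convention is available.

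The paper closes this gap by contradiction. Suppose $\theta_\ell$ is a root of $|E-A_Iz|$ but neither a zero nor a singular point of $\zeta_I$. From the product factor $(1-z/\theta_\ell)^{-A_{\ell,1}/d_{N-r}}$ one gets $A_{\ell,1}=0$. Then, descending from $j=e_\ell-1$ down to $j=1$ in the exponential part of the formula of Theorem \ref{any}, the coefficient of $z^j/\bigl(j(1-z/\theta_\ell)^j\bigr)$ must vanish for $\zeta_I$ to be regular and nonzero at $\theta_\ell$; since these coefficients depend on $A_{\ell,j+1},\dots,A_{\ell,e_\ell}$ in a triangular fashion (the $j=e_\ell-1$ coefficient is a nonzero multiple of $A_{\ell,e_\ell}$ alone), one concludes $A_{\ell,j}=0$ for all $j$. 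Recombining the remaining partial fractions then exhibits $\mathrm{sum}(\mathrm{adj}(E-A_Iz)A_I)/|E-A_Iz|$ with a denominator omitting the factor $(z-\theta_\ell)^{e_\ell}$, which the paper argues is incompatible with the fixed degree $N-r$ of $|E-A_Iz|$. If you want to complete your version, you need some argument of this descent-plus-contradiction type; the logarithmic-derivative identity alone does not supply it.
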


This paper is organized as follows.

In section \ref{preparation}, we prove some lemmas for a proof of our main theorem.

In section \ref{main-proof}, we prove our main theorem.

\section{Preparations for our main theorem}\label{preparation}

\subsection{Notation}

Throughout this paper, we will use the following notations.

\begin{enumerate}
\item We mean $I$ is a finite category which has $N$-objects.
\item The three polynomials $|E-A_Iz|$, $\mathrm{sum}(\mathrm{adj}(E-A_Iz))$ and $\mathrm{sum}(\mathrm{adj}(E-A_Iz)A_I)$ which will be often used later are expressed by the following form
$$|E-A_Iz|=d_0+d_1 z+\dots +d_N z^N,$$
$$\mathrm{sum}(\mathrm{adj}(E-A_Iz))=k_0+k_1 z+\dots+k_{N-1}z^{N-1}$$
and 
$$\mathrm{sum}(\mathrm{adj}(E-A_Iz)A_I)=m_0+m_1 z+\dots +m_{N-1} z^{N-1}.$$
\end{enumerate}
By Lemma 2.2 of \cite{NogB}, the degree of the third polynomial is less than or equal to $N-1$.
The coefficients $d_0, d_1$ and $d_N$ are $1, (-1)^N\mathrm{tr}(A_I)$ and $(-1)^N |A_I|$, respectively. Hence, the degree of $|E-A_I z|$ is larger or equal to 1 if $I$ is not an empty category since $\mathrm{tr}(A_I) \ge N$.

\subsection{Some lemmas}
In this subsection, we investigate the three polynomials above.

\begin{lemma}\label{d-lemma}
The degree of $|E-A_I z|$ is $N-r$ if and only if $|A_I-E z|$ can be divided by $z^r$, but can not be divided by $z^{r+1}$.
\begin{proof}
We have
$$|A_I-E z|=(-1)^N (d_0z^N+d_1 z^{N-1}+\dots+d_{N-1} z+d_N).$$
Indeed, if we write $$|A_I-Ez|=a_0+a_1z+\dots +a_Nz^N,$$
then we have
\begin{eqnarray*}
|E-A_I z|& = & (-1)^N z^N \bigg| A_I-E\frac{1}{z} \bigg| \\
&=&(-1)^N z^N \bigg(a_0+a_1\frac{1}{z}+\dots + a_N\frac{1}{z^N}\bigg)\\
&=&(-1)^N (a_0z^N+a_1 z^{N-1}+\dots +a_N) \\
&=&d_0+d_1z+\dots+d_N z^N.
\end{eqnarray*}
Hence, we have $a_0=(-1)^N d_N, a_1=(-1)^N d_{N-1},\dots, a_N=(-1)^N d_0$.

Suppose $\mathrm{deg}(|E-A_I z|)=N-r$. Then, $d_N=d_{N-1}=\dots=d_{N-r+1}=0$, but $d_{N-r}\not =0.$ Hence, we have
$$|A-E z|=(-1)^N d_0z^N+\dots +(-1)^N d_{N-r}z^{r}.$$
So $|A_I-E z|$ can be divided by $z^r$, but can not be divided by $z^{r+1}$.

Conversely, if the polynomial $|A_I-E z|$ can be divided by $z^r$, but can not be divided by $z^{r+1}$, then
$d_N=d_{N-1}=\dots=d_{N-r+1}=0$ and $d_{N-r}\not=0$. Hence, $\mathrm{deg}(|E-A_I z|)=N-r$.
\end{proof}
\end{lemma}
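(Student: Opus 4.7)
The plan is to relate the two polynomials $|E-A_Iz|$ and $|A_I-Ez|$ by the substitution $z \mapsto 1/z$, which reverses the order of coefficients. Once this relation is established, the statement becomes an immediate bookkeeping fact about vanishing coefficients at opposite ends of a polynomial.

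First, I would write $E - A_I z = -z(A_I - E/z)$ and use $|cM| = c^N|M|$ to obtain
$$|E - A_I z| = (-1)^N z^N \left| A_I - \tfrac{1}{z} E \right|.$$
Expanding $|A_I - Ez| = a_0 + a_1 z + \cdots + a_N z^N$ and substituting $1/z$ for $z$, then multiplying through by $(-1)^N z^N$, I get
$$|E - A_I z| = (-1)^N\bigl(a_N + a_{N-1} z + \cdots + a_0 z^N\bigr).$$
Comparing with $|E - A_I z| = d_0 + d_1 z + \cdots + d_N z^N$ yields the coefficient correspondence
$$a_k = (-1)^N d_{N-k} \quad (0 \le k \le N).$$

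With this dictionary in hand, both directions of the equivalence are immediate. The condition $\deg|E - A_I z| = N-r$ says precisely that $d_N = d_{N-1} = \cdots = d_{N-r+1} = 0$ and $d_{N-r} \ne 0$, which by the correspondence is equivalent to $a_0 = a_1 = \cdots = a_{r-1} = 0$ and $a_r \ne 0$. This last condition is exactly the statement that $z^r$ divides $|A_I - E z|$ while $z^{r+1}$ does not.

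There is no real obstacle here; the only care needed is keeping track of the sign $(-1)^N$ and making sure the index reversal $k \leftrightarrow N-k$ is written correctly. Everything else is a matter of matching coefficients.
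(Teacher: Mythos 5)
Your proposal is correct and follows essentially the same route as the paper: both establish $|E-A_Iz|=(-1)^Nz^N|A_I-\tfrac{1}{z}E|$, read off the coefficient reversal $a_k=(-1)^Nd_{N-k}$, and then translate the degree condition on $|E-A_Iz|$ into the divisibility-by-$z^r$ condition on $|A_I-Ez|$. No gaps.
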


\begin{lemma}\label{k-lemma}
The degree of $\mathrm{sum}(\mathrm{adj}(E-A_I z))$ is $N-1-s$ if and only if $\mathrm{sum}(\mathrm{adj}(E-A_I z))$ can be divided by $z^s$, but can not be divided by $z^{s+1}$.
\begin{proof}
We have 
$$\mathrm{sum}(\mathrm{adj}(A_I -Ez))=(-1)^{N-1}(k_0z^{N-1}+k_1 z^{N-2}+\dots +k_{N-1}).$$
Indeed, if we write 
$$\mathrm{sum}(\mathrm{adj}(A_I -Ez ))=b_0+b_1z+\dots+b_{N-1}z^{N-1},$$
then we have
\begin{eqnarray*}
\mathrm{sum}(\mathrm{adj}(E-A_I z))&=&(-z)^{N-1} \mathrm{sum}\bigg(\mathrm{adj}\bigg(A_I -E \frac{1}{z}\bigg)\bigg) \\
&=&(-z)^{N-1}\bigg(b_0+b_1\frac{1}{z}+\dots+b_{N-1}\frac{1}{z^{N-1}}\bigg) \\
&=&(-1)^{N-1}b_0 z^{N-1}+\dots +(-1)^{N-1} b_{N-1}\\
&=&k_0+k_1z+\dots+k_{N-1}z^{N-1}.
\end{eqnarray*}
Hence, we have $b_0=(-1)^{N-1}k_{N-1}, b_{1}=(-1)^{N-1}k_{N-2}, \dots, b_{N-1}=(-1)^{N-1} k_0$.

Suppose $\mathrm{deg}( \mathrm{sum}(\mathrm{adj}(E-A_Iz))  )=N-1-s$. Then, $k_{N-1}=k_{N-2}=\dots=k_{N-s}=0$, but $k_{N-s-1}\not =0.$ Hence, we have
$$ \mathrm{sum}(\mathrm{adj}(E-A_Iz))  =(-1)^{N-1} k_0z^{N-1}+\dots +(-1)^{N-1} k_{N-1-s}z^{s}.$$
So $\mathrm{sum}(\mathrm{adj}(E-A_Iz))$ can be divided by $z^s$, but can not be divided by $z^{s+1}$.

Conversely, if the polynomial $\mathrm{sum}(\mathrm{adj}(E-A_Iz))$ can be divided by $z^s$, but can not be divided by $z^{s+1}$, then
$k_{N-1}=k_{N-2}=\dots=k_{N-s}=0$ and $k_{N-1-s}\not=0$. Hence, $\mathrm{deg}(\mathrm{sum}(\mathrm{adj}(E-A_Iz)) )=N-1-s$.
\end{proof} 
\end{lemma}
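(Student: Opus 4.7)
The plan is to parallel the argument for Lemma \ref{d-lemma}: use the substitution $z\mapsto 1/z$ together with the scalar identity for the adjugate to exhibit $\mathrm{sum}(\mathrm{adj}(A_I-Ez))$ as the coefficient-reversal of $\mathrm{sum}(\mathrm{adj}(E-A_Iz))$ (up to a global sign), and then read off the equivalence from this reversal.

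First I would invoke the scalar rule $\mathrm{adj}(cM)=c^{N-1}\mathrm{adj}(M)$, valid for any $N\times N$ matrix $M$ and scalar $c$, with $c=-z$ and $M=A_I-\tfrac{1}{z}E$:
$$\mathrm{adj}(E-A_Iz)=\mathrm{adj}\Bigl(-z\bigl(A_I-\tfrac{1}{z}E\bigr)\Bigr)=(-z)^{N-1}\mathrm{adj}\bigl(A_I-\tfrac{1}{z}E\bigr).$$
Applying the (linear) sum-of-entries operation to both sides and writing $\mathrm{sum}(\mathrm{adj}(A_I-Ew))=b_0+b_1 w+\dots+b_{N-1}w^{N-1}$, I would substitute $w=1/z$ and expand $(-z)^{N-1}\sum_i b_i z^{-i}$; matching the resulting polynomial of degree at most $N-1$ against $k_0+k_1 z+\dots+k_{N-1}z^{N-1}$ gives the reversal relation $b_i=(-1)^{N-1}k_{N-1-i}$ for all $i$.

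With this coefficient dictionary in hand, the equivalence in the lemma follows by inspection. The hypothesis $\deg(\mathrm{sum}(\mathrm{adj}(E-A_Iz)))=N-1-s$ is precisely the statement that $k_{N-1}=\dots=k_{N-s}=0$ while $k_{N-1-s}\neq 0$; via $b_i=(-1)^{N-1}k_{N-1-i}$ this translates into $b_0=\dots=b_{s-1}=0$ and $b_s\neq 0$, which is exactly divisibility of $\mathrm{sum}(\mathrm{adj}(A_I-Ez))$ by $z^s$ but not by $z^{s+1}$ (matching the analog in Lemma \ref{d-lemma}, where the roles of the two sign conventions of the characteristic polynomial play the same role).

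No step here is genuinely hard; the only delicate points are the sign factor $(-z)^{N-1}$ and correctly pairing the indices after the substitution. The entire argument runs in lockstep with Lemma \ref{d-lemma}, with $|E-A_Iz|$ replaced by $\mathrm{sum}(\mathrm{adj}(E-A_Iz))$ and the top degree downgraded from $N$ to $N-1$ because the adjugate of an $N\times N$ matrix has entries of degree at most $N-1$ in the matrix entries.
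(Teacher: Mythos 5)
Your proof is correct and follows essentially the same route as the paper: establish the coefficient-reversal relation $b_i=(-1)^{N-1}k_{N-1-i}$ via the substitution $z\mapsto 1/z$ and the factor $(-z)^{N-1}$ coming from the adjugate of a scalar multiple, then read off the equivalence. Note that you have in fact proved the statement the paper actually needs (and which its own displayed computation and the later use in Lemma \ref{classify} establish), namely divisibility of $\mathrm{sum}(\mathrm{adj}(A_I-Ez))$ by $z^s$; the occurrence of $\mathrm{sum}(\mathrm{adj}(E-A_Iz))$ on the divisibility side of the lemma as printed is a typo, since that polynomial has nonzero constant term $k_0=N$.
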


\begin{lemma}\label{classify}
Suppose the degree of $|E-A_I z|$ is $N-r$ and the degree of $\mathrm{sum}(\mathrm{adj}(E-A_I z))$ is $N-1-s$.
Then, $I$ has series Euler characteristic if and only if $s\ge r$. In this case, we have
$$\chi_{\Sigma}(I)=\begin{cases}0 &\text{if } s>r \\ -\frac{k_{N-1-s}}{d_{N-r}}&\text{if } s=r\end{cases}.$$
\begin{proof}

The finite category $I$ has series Euler characteristic if and only if the rational function
$$\frac{\mathrm{sum}(\mathrm{adj}(E-(A_I-E)t))}{|E-(A_I-E)t|}$$
can be substituted $-1$ to $t$ if and only if
the rational function
$$\frac{\mathrm{sum}(\mathrm{adj}(A_I-Ez))}{|A_I-Ez|}$$
can be substituted $0$ to $z$ (page 45 of \cite{Leib}).
Lemma \ref{d-lemma} and Lemma \ref{k-lemma} imply
\begin{eqnarray*}
\frac{\mathrm{sum}(\mathrm{adj}(A_I-Ez))}{|A_I-Ez|}&=&\frac{z^s h(z)}{z^r g(z)}
\end{eqnarray*}
for some polynomials $g(z)$ and $h(z)$ of $\mathbb{Z}[z]$ such that $g(z)$ and $h(z)$ can not divided by $z$. Hence, the rational function
$$\frac{\mathrm{sum}(\mathrm{adj}(A_I-Ez))}{|A_I-Ez|}$$
can be substituted $0$ to $z$ if and only if $s\ge r$. So the first claim is proved.

Suppose $I$ has series Euler characteristic. Then, we have $s\ge r$. If $s>r$, then it is clear $\chi_{\Sigma}(I)=0$. If $s=r$, then we have
\begin{eqnarray*}
\frac{\mathrm{sum}(\mathrm{adj}(A_I-Ez))}{|A_I-Ez|}&=&\frac{(-1)^{N-1}(k_0 z^{N-1} +k_1 z^{N-2}+\dots+k_{N-1-s} z^s)}{(-1)^N(d_0 z^N+d_1 z^{N-1}+\dots+d_{N-r}z^r)} \\
&=&-\frac{k_0z^{N-1-s}+\dots +k_{N-1-s}}{d_0z^{N-r}+\dots+d_{N-r}}.
\end{eqnarray*}
Hence, we obtain $\chi_{\Sigma}(I)=-\frac{k_{N-1-s}}{d_{N-r}}$.
\end{proof}
\end{lemma}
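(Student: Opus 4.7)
The plan is to reduce the question to substituting $z=0$ into an explicit rational function whose numerator and denominator have known factorizations by powers of $z$.

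First I would recall Leinster's criterion on page~45 of \cite{Leib}: the category $I$ has series Euler characteristic exactly when the rational function
$$\frac{\mathrm{sum}(\mathrm{adj}(E-(A_I-E)t))}{|E-(A_I-E)t|}$$
admits the substitution $t=-1$, and this value is $\chi_{\Sigma}(I)$. A direct change of variable shows that this is equivalent to the rational function
$$\frac{\mathrm{sum}(\mathrm{adj}(A_I-Ez))}{|A_I-Ez|}$$
admitting the substitution $z=0$, with the same value. This rewriting is where the previous two lemmas become useful, because $|A_I - Ez|$ and $\mathrm{sum}(\mathrm{adj}(A_I-Ez))$ are precisely the polynomials whose $z$-adic orders at $0$ were computed in Lemmas \ref{d-lemma} and \ref{k-lemma}.

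Next I would apply Lemma \ref{d-lemma} and Lemma \ref{k-lemma} to write
$$|A_I-Ez| = z^r g(z), \qquad \mathrm{sum}(\mathrm{adj}(A_I-Ez)) = z^s h(z),$$
with $g(z), h(z) \in \mathbb{Z}[z]$ and $g(0), h(0) \neq 0$. The quotient becomes $z^{s-r} h(z)/g(z)$, so the substitution $z=0$ is defined exactly when $s \ge r$; this gives the first claim of the lemma.

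For the value, the case $s>r$ is immediate: the quotient vanishes at $z=0$, so $\chi_{\Sigma}(I)=0$. For the case $s=r$, I would write the numerator and denominator explicitly using the coefficients from the Notation subsection, namely the expansions produced in the proofs of Lemmas \ref{d-lemma} and \ref{k-lemma}: the lowest-order nonzero terms of $\mathrm{sum}(\mathrm{adj}(A_I - Ez))$ and $|A_I - Ez|$ are $(-1)^{N-1} k_{N-1-s} z^s$ and $(-1)^{N} d_{N-r} z^r$ respectively. After cancelling $z^r = z^s$ and the sign $(-1)^{N-1}/(-1)^N = -1$, the constant term of the resulting rational function is $-k_{N-1-s}/d_{N-r}$, which is the claimed formula for $\chi_{\Sigma}(I)$. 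The main routine obstacle will just be bookkeeping of the sign factors $(-1)^N$ and $(-1)^{N-1}$ when extracting the constant term after cancellation; there is no conceptual difficulty once the two earlier lemmas are in hand.
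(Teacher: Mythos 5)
Your proposal is correct and follows essentially the same route as the paper: Leinster's criterion, the change of variable reducing the substitution $t=-1$ to the substitution $z=0$ in $\mathrm{sum}(\mathrm{adj}(A_I-Ez))/|A_I-Ez|$, the factorizations $z^r g(z)$ and $z^s h(z)$ from Lemmas \ref{d-lemma} and \ref{k-lemma}, and the same sign bookkeeping $(-1)^{N-1}/(-1)^N=-1$ in the case $s=r$. No gaps.
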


\begin{lemma}\label{deg-lemma}
If $I$ has series Euler characteristic, then we have $$\deg\big( \mathrm{sum}(\mathrm{adj}(E-A_Iz)A_I)\big)=\deg (|E-A_Iz|)-1 .$$
\begin{proof}
Lemma 2.2 of \cite{NogB} implies $$\mathrm{sum}(\mathrm{adj}(E-A_Iz)A_I)=\frac{1}{z}\bigg(\mathrm{sum}(\mathrm{adj}(E-A_Iz))-N |E-A_Iz |\bigg).$$
Note that the polynomial $$\mathrm{sum}(\mathrm{adj}(E-A_Iz))-N| E-A_Iz|$$ has no constant term since $k_0=N$ and $d_0=1$. Hence, we have $$\deg \big( \mathrm{sum}(\mathrm{adj}(E-A_Iz)A_I) \big) =\deg  \bigg(\mathrm{sum}(\mathrm{adj}(E-A_Iz))-N|E-A_Iz|\bigg)-1.$$
Since $I$ has series Euler characteristic, Lemma \ref{classify} implies $s\ge r$. Hence, we have the inequality $$\deg ( \mathrm{sum}(\mathrm{adj}(E-A_Iz)) )=N-1-s <N-r=\deg(|E-A_Iz|).$$
So we obtain
$$\deg\big( \mathrm{sum}(\mathrm{adj}(E-A_Iz)A_I)\big)=\deg (|E-A_Iz|)-1 .$$
\end{proof}
\end{lemma}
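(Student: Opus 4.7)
The plan is to reduce the claim to the identity from Lemma 2.2 of \cite{NogB}, which gives
$$\mathrm{sum}(\mathrm{adj}(E-A_Iz)A_I)=\frac{1}{z}\bigl(\mathrm{sum}(\mathrm{adj}(E-A_Iz))-N|E-A_Iz|\bigr).$$
With this identity in hand, the question about the degree of the left-hand side becomes a question about the degree of the bracketed polynomial on the right, together with a check that division by $z$ is legitimate.

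First I would verify that the bracketed polynomial is in fact divisible by $z$, i.e.\ that it has no constant term. In the notation fixed in the paper, its constant term is $k_0-N\,d_0$, and the known values $d_0=1$ and $k_0=N$ (this latter is just the sum of the $N$ diagonal $(i,i)$-cofactors of $E$, each equal to $1$) make this vanish. Hence division by $z$ produces an honest polynomial whose degree is exactly one less than the degree of $\mathrm{sum}(\mathrm{adj}(E-A_Iz))-N|E-A_Iz|$.

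Next I would compare the two polynomials inside the brackets in order to identify which one controls the degree. Here the hypothesis enters through Lemma \ref{classify}: assuming $I$ has series Euler characteristic forces $s\ge r$, so that
$$\deg\bigl(\mathrm{sum}(\mathrm{adj}(E-A_Iz))\bigr)=N-1-s\le N-1-r<N-r=\deg(|E-A_Iz|).$$
Consequently the leading behaviour of the bracketed polynomial is dictated by $-N|E-A_Iz|$, its degree equals $\deg(|E-A_Iz|)$, and after dividing by $z$ the degree of $\mathrm{sum}(\mathrm{adj}(E-A_Iz)A_I)$ comes out to $\deg(|E-A_Iz|)-1$, as required.

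There is no real obstacle in this argument; the only subtlety worth highlighting is the cancellation of the constant term, since without $k_0=N\,d_0$ the identity of Lemma 2.2 of \cite{NogB} would only give an upper bound on the degree rather than the exact value. All other steps are bookkeeping about the degrees of the three polynomials introduced at the start of Section \ref{preparation}.
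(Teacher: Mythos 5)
Your proposal is correct and follows essentially the same route as the paper's proof: the identity from Lemma 2.2 of \cite{NogB}, the cancellation of the constant term via $k_0=N$ and $d_0=1$, and the use of Lemma \ref{classify} to get $s\ge r$ so that $-N|E-A_Iz|$ dominates the degree. No gaps.
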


\begin{lemma}\label{mlemma}
If $I$ has series Euler characteristic and $\deg(|E-A_Iz|)=N-r$ and $$\deg ( \mathrm{sum}(\mathrm{adj}(E-A_Iz)) )=N-1-s,$$ then for the polynomial $$\mathrm{sum}(\mathrm{adj}(E-A_Iz)A_I)=m_0+m_1z+\dots+m_{N-1-r}z^{N-1-r},$$
we have $m_{N-1-r}=-N d_{N-r}$ and
$$m_{N-2-r}=\begin{cases}-Nd_{N-1-r} & \text{if} \; s>r \\ -Nd_{N-1-r}+k_{N-1-r} &\text{if}\; s=r .\end{cases}$$
\begin{proof}
Lemma 2.2 of \cite{NogB} implies \begin{eqnarray*}
\mathrm{sum}(\mathrm{adj}(E-A_I z)A_I)&=&\mathrm{sum}(\mathrm{adj}(E-A_I z)A_I)=m_0+m_1z\\
&&+\dots+m_{N-1-r}z^{N-1-r} \\
&=&\frac{1}{z}\bigg(\mathrm{sum}(\mathrm{adj}(E-A_Iz))-N |E-A_Iz |\bigg)\\
&=&\frac{1}{z}\bigg(k_0+k_1z+\dots+k_{N-1-s}z^{N-1-s} \\
&&-N\big(d_0+d_1z+\dots+d_{N-r}z^{N-r}\big) \bigg) \\
&=&(k_1-Nd_1)+(k_2-Nd_2)z+\dots \\
&&+(k_{n-1-s}-Nd_{N-1-s})z^{N-2-s}-Nd_{N-s}z^{N-1-s}\\
&&+\dots -Nd_{N-r}z^{N-1-r}.
\end{eqnarray*}
Since $I$ has series Euler characteristic, Lemma \ref{deg-lemma} implies $s\ge r$. Hence, $$N-1-s<N-r,$$ so that $m_{N-1-r}=-Nd_{N-r}$. 

If $s>r$, then $N-1-s<N-1-r$, so that $m_{N-2-r}=-Nd_{N-1-r}$.

If $s=r$, then $m_{N-2-r}=-Nd_{N-1-r}+k_{N-1-r}$.
\end{proof}
\end{lemma}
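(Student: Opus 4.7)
The plan is to apply Lemma 2.2 of \cite{NogB} to rewrite
$$\mathrm{sum}(\mathrm{adj}(E-A_Iz)A_I) = \frac{1}{z}\bigl(\mathrm{sum}(\mathrm{adj}(E-A_Iz)) - N\,|E-A_Iz|\bigr),$$
and then simply read off the two top-degree coefficients. Since $d_0 = 1$ and $k_0 = N$, the numerator has vanishing constant term, so the division by $z$ is legitimate and shifts every coefficient down by one degree. In particular, the coefficient $m_j$ of $z^j$ on the left equals the coefficient of $z^{j+1}$ in $\mathrm{sum}(\mathrm{adj}(E-A_Iz)) - N\,|E-A_Iz|$.

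Because $I$ has series Euler characteristic, Lemma \ref{classify} gives $s \ge r$, so that $\deg(\mathrm{sum}(\mathrm{adj}(E-A_Iz))) = N-1-s \le N-1-r < N-r = \deg(|E-A_Iz|)$. Therefore the coefficient of $z^{N-r}$ in the numerator is controlled purely by the $-N|E-A_Iz|$ term and equals $-Nd_{N-r}$; this yields $m_{N-1-r} = -Nd_{N-r}$, as claimed.

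For $m_{N-2-r}$ I extract the coefficient of $z^{N-1-r}$ in the numerator. The $-N|E-A_Iz|$ piece contributes $-Nd_{N-1-r}$ in either case. The $\mathrm{sum}(\mathrm{adj}(E-A_Iz))$ piece contributes $0$ when $s > r$ (since the relevant degree $N-1-r$ strictly exceeds $N-1-s$) and contributes $k_{N-1-r}$ when $s = r$. Combining the two cases produces the stated piecewise formula.

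The whole argument is essentially mechanical coefficient extraction from the identity in \cite{NogB}. The only delicate points are tracking the index arithmetic carefully and recognizing that the hypothesis ``$I$ has series Euler characteristic'' is used exactly once, to force $s \ge r$ via Lemma \ref{classify}. I do not foresee any substantive obstacle.
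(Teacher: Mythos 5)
Your proof is correct and follows essentially the same route as the paper: both apply Lemma 2.2 of \cite{NogB}, observe that $k_0 - Nd_0 = 0$ so the division by $z$ is a clean coefficient shift, and then read off the coefficients of $z^{N-r}$ and $z^{N-1-r}$ using $s \ge r$ to kill the unwanted $k$-terms. The only cosmetic difference is that you invoke Lemma \ref{classify} directly for $s \ge r$ where the paper routes this through Lemma \ref{deg-lemma}; the content is identical.
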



\section{A proof of main theorem}\label{main-proof}

\begin{them}\label{any}
Let $I$ be a finite category. Suppose the polynomial $|E-A_Iz|$ is factored to the following form:
$$|E-A_Iz|=d_{N-r}(z-\theta_1)^{e_1}\dots(z-\theta_n)^{e_n}$$ where $1\le r\le N-1$ each $e_i\ge 1$ and $\theta_i\not = \theta_j$ if $i\not = j$. Suppose 
$$\mathrm{sum}(\mathrm{adj}(E-A_I z)A_I)=q(z)|E-A_I z|+r(z)$$
where $$\deg(r(z))<\deg |E-A_Iz|$$
and $\frac{r(z)}{|E-A_Iz|}$ has a partial fraction decomposition to the following form 
$$\frac{r(z)}{ |E-A_I z|}=\frac{1}{d_{N-r} }\sum^n_{k=1}\sum^{e_k}_{j=1}\frac{A_{k,j}}{(z-\theta_k)^j}.$$ 
Then the zeta function of $I$ is 
\begin{multline*}
\zeta_I(z)=\prod^n_{k=1}\frac{1}{(1-\frac{1}{\theta_k}z)^{-\frac{A_{k,1}}{d_{N-r}}}}  \times \exp\bigg( Q(z)+  \\ \frac{1}{d_{N-r}}  \sum^n_{k=1} \sum^{e_k -1}_{j=1} \frac{z^j}{j(1-\frac{1}{\theta_k}z)^j}\bigg( \sum_{i=j}^{e_k-1}  \binom{i-1}{j-1} (-1)^{i-1} \bigg(\frac{1}{\theta_k}\bigg)^{i+j} A_{k,i+1}\bigg)\bigg)
\end{multline*}
where $Q(z)=\int q(z) dz$ is a polynomial whose constant term is $0$.
\begin{proof}
Since $$\deg(r(z))<\deg |E-A_Iz|,$$ we can have a partial fraction decomposition of the following form $$\frac{r(z)}{|E-A_I z|}=\frac{1}{d_{N-r}}\sum^n_{k=1}\sum^{e_k}_{j=1}\frac{A_{k,j}}{(z-\theta_k)^j}$$ for some complex numbers $A_{k,j}$. Hence, we have
\begin{eqnarray*}
\frac{\mathrm{sum}(\mathrm{adj}(E-A_I z)A_I}{|E-A_Iz|}&=&q(z)+\frac{r(z)}{|E-A_Iz|} \\
&=&q(z)+\frac{1}{d_{N-r} }\sum^n_{k=1}\sum^{e_k}_{j=1}\frac{A_{k,j}}{(z-\theta_k)^j}.
\end{eqnarray*}
Proposition 2.1 of \cite{NogB} implies
\begin{eqnarray*}
\zeta_I(z)&=&\exp\left(\int q(z) dz +\int \frac{1}{d_{N-r}}\sum^n_{k=1}\sum^{e_k}_{j=1}\frac{A_{k,j}}{(z-\theta_k)^j} dz \right)\\
&=&\exp\bigg( \int q(z) dz+\frac{1}{d_{N-r}}\int \sum^n_{k=1}\frac{A_{k,1}}{(z-\theta_k)} dz +\\
&& \frac{1}{d_{N-r}} \int \sum^n_{k=1}\sum^{e_k}_{j=2}\frac{A_{k,j}}{(z-\theta_k)^j} dz\bigg) \\
&=&\exp\bigg( Q(z)+\frac{1}{d_{N-r}} \sum^n_{k=1} A_{k,1}\log (z-\theta_k)  +\\
&& \frac{1}{d_{N-r}} \sum^n_{k=1}\sum^{e_k}_{j=2}-\frac{A_{k,j}}{(j-1)}\frac{1}{(z-\theta_k)^{j-1}} +C\bigg)\\
&=&\prod_{k=1}^n \frac{1}{(z-\theta_k)^{-\frac{A_{k,1}}{d_{N-r}}}}\times\\
&&\exp\bigg(Q(z)+\frac{1}{d_{N-r}} \sum^n_{k=1}\sum^{e_k-1}_{j=1}-\frac{A_{k,j+1}}{j}\frac{1}{(z-\theta_k)^{j}} \bigg)\exp{C}\\
&=&\prod_{k=1}^n \frac{1}{(-\theta_k)^{-\frac{A_{k,1}}{d_{N-r}}}(1-\frac{1}{\theta_k}z)^{-\frac{A_{k,1}}{d_{N-r}}}}\times\\
&&\exp\bigg(Q(z)+\frac{1}{d_{N-r}} \sum^n_{k=1}\sum^{e_k-1}_{j=1}-\frac{A_{k,j+1}}{j}\frac{1}{(z-\theta_k)^{j}} \bigg)C'\\
&=&\prod_{k=1}^n \frac{1}{(1-\frac{1}{\theta_k}z)^{-\frac{A_{k,1}}{d_{N-r}}}}\times\\
&&\exp\bigg(Q(z)+\frac{-1}{d_{N-r}} \sum^n_{k=1}\sum^{e_k-1}_{j=1}\frac{A_{k,j+1}}{j}\frac{1}{(z-\theta_k)^{j}} \bigg)C''
\end{eqnarray*}
where we did and will replace the constant term as $C,C'$ and $C''\dots$. Lemma 2.7 of \cite{NogB} implies
\begin{eqnarray*}
\zeta_I(z)&=&\prod_{k=1}^n \frac{1}{(1-\frac{1}{\theta_k}z)^{-\frac{A_{k,1}}{d_{N-r}}}}\times\\
&&\exp\bigg(Q(z)+\frac{-1}{d_{N-r}} \sum^n_{k=1}\sum^{e_k-1}_{j=1}\frac{A_{k,j+1}}{j} \sum_{i=1}^j \frac{\binom{j}{i}(-\frac{1}{\theta_k})^j (-z)^i}{(z-\theta_k)^i}\bigg)C'''
\end{eqnarray*}
Here, we use the boundary condition $\zeta_{I}(0)=1$. This condition is directly implied by the definition of the zeta function. Hence, we obtain $C'''=1$. By exchanging $\sum_i$ and $\sum_j$, we have
\begin{multline*}
\zeta_I(z)=\prod^n_{k=1}\frac{1}{(1-\frac{1}{\theta_k}z)^{-\frac{A_{k,1}}{d_{N-r}}}}  \\ \times \exp\left( Q(z)+\frac{1}{d_{N-r}}  \sum^n_{k=1} \sum^{e_k -1}_{j=1} \frac{z^j}{j(1-\frac{1}{\theta_k}z)^j}\bigg( \sum_{i=j}^{e_k-1}  \binom{i-1}{j-1} (-1)^{i+1} \bigg(\frac{1}{\theta_k}\bigg)^{i+j} A_{k,i+1} \bigg)\right)
\end{multline*}
Hence, we obtain the result.
\end{proof}
\end{them}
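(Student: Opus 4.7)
The plan is to start from the key identity of \cite{NogB} (Proposition 2.1) which expresses the logarithmic derivative of $\zeta_I$ as the rational function $\mathrm{sum}(\mathrm{adj}(E-A_Iz)A_I)/|E-A_Iz|$. Since this identifies $\log \zeta_I(z)$ (up to an additive constant) with an antiderivative of the given rational function, the bulk of the argument becomes a symbolic computation of that antiderivative, followed by re-exponentiation and a clean-up.

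First I would apply the hypothesized polynomial division to write
\[
\frac{\mathrm{sum}(\mathrm{adj}(E-A_Iz)A_I)}{|E-A_Iz|}=q(z)+\frac{r(z)}{|E-A_Iz|}=q(z)+\frac{1}{d_{N-r}}\sum_{k=1}^n\sum_{j=1}^{e_k}\frac{A_{k,j}}{(z-\theta_k)^j},
\]
so that term-by-term integration yields a polynomial piece $Q(z)=\int q(z)\,dz$ (with zero constant term by convention), a logarithmic piece $\sum_k A_{k,1}\log(z-\theta_k)$ from the simple poles, and a rational piece $-\sum_{k,j\ge 2}\frac{A_{k,j}}{(j-1)(z-\theta_k)^{j-1}}$ from the higher poles, all scaled by $1/d_{N-r}$, plus an overall integration constant.

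Next I would re-exponentiate. The log terms turn into a finite product $\prod_k (z-\theta_k)^{A_{k,1}/d_{N-r}}$, which I would rewrite using $z-\theta_k=-\theta_k(1-z/\theta_k)$ so that the scalar factors $(-\theta_k)^{A_{k,1}/d_{N-r}}$ get absorbed into a new integration constant $C'$. This gives a product of $1/(1-z/\theta_k)^{-A_{k,1}/d_{N-r}}$ factors, matching the first factor in the target formula. The boundary condition $\zeta_I(0)=1$, immediate from the defining power series, then forces $C'=1$, so no unknown constant survives.

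The main obstacle — and the step I would flag as the crux — is converting the remaining $1/(z-\theta_k)^j$ terms into the form $z^j/(1-z/\theta_k)^j$ that appears in the statement. For this I would invoke Lemma 2.7 of \cite{NogB}, which supplies a combinatorial expansion of $1/(z-\theta_k)^j$ in terms of $(-z)^i/(z-\theta_k)^i$ with binomial coefficients; after substituting this expansion, the double sum in the exponent reindexes as a sum over $j$ of a coefficient that is itself a sum over $i\ge j$ of $\binom{i-1}{j-1}$ times an alternating power of $1/\theta_k$ and an $A_{k,i+1}$ factor. I would carry out this interchange of summations carefully, paying attention to the sign $(-1)^{i+1}$ produced by combining the $(-1)^j$ from $\theta_k\mapsto 1/\theta_k$, the $(-1)^{i}$ from $(-z)^i$, and the minus sign in $-A_{k,j+1}/j$, and then collect the result into the stated closed form. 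Everything else — the polynomial $Q(z)$, the factored product, and the normalization — follows by direct bookkeeping.
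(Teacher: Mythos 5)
Your proposal follows essentially the same route as the paper's own proof: both rest on Proposition 2.1 of \cite{NogB} for the logarithmic-derivative identity, perform the same polynomial division and partial-fraction integration, invoke Lemma 2.7 of \cite{NogB} for the conversion of $1/(z-\theta_k)^j$ into the $z^j/(1-\tfrac{1}{\theta_k}z)^j$ form, and fix the constant via $\zeta_I(0)=1$. The sign and reindexing bookkeeping you describe (noting that $(-1)^{i+1}=(-1)^{i-1}$) matches the paper's computation, so there is nothing further to add.
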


It is very important to study about behavior of singular points and zeros of a zeta function. By the following corollary, the problem is reduced to investigate properties of roots of $|E-A_I z|$.

\begin{coro}
Let $I$ be a finite category. A complex number $z$ is a singular point or a zero of $\zeta_I$ if and only if $z$ is a root of $|E-A_Iz|$.
\begin{proof}
Theorem \ref{any} directly implies all of the singular points and zeros are roots of $|E-A_Iz|$. Conversely, suppose $z$ is a root of $|E-A_Iz|$ but $z$ is not a singular point and a zero. Then, $z=\theta_{\ell}$ for some $\ell$. The index $-\frac{A_{\ell,1}}{d_{N-r}}$ must be 0. Namely, we have $A_{\ell,1}=0$. For $j=e_{\ell}-1$, $$\sum^{e_{\ell}-1}_{i=e_{\ell}-1}-A_{\ell,i+1}\binom{i-1}{j-1}(-\frac{1}{\theta_{\ell}})^{i-(e_{\ell}-1)}$$ must be 0 since $\zeta_I(z)$ is defined. Hence, we have $A_{\ell,e_{\ell}}=0$. As this, we can show each $A_{\ell,j}=0$ by the descent from $j=e_{\ell}-1$. Hence, we have 
\begin{eqnarray*}
\frac{r(z)}{|E-A_I z|}&=&\frac{1}{d_{N-r}}\sum^n_{k=1}\sum^{e_{\ell}}_{j=1}\frac{A_{k,j}}{(z-\theta_{\ell})^j}\\
&=&\frac{1}{d_{N-r}}\sum^n_{k=1,k\not=l}\sum^{e_{\ell}}_{j=1}\frac{A_{k,j}}{(z-\theta_{\ell})^j}
\end{eqnarray*}
Hence, we obtain
\begin{eqnarray*}
|E-A_I z|&=&d_{N-r} (z-\theta_1)^{e_1}\dots(z-\theta_n)^{e_n} \\
&=&d_{N-r} (z-\theta_1)^{e_1}\dots \\
&&(z-\theta_{\ell-1})^{e_{\ell-1}}(z-\theta_{\ell+1})^{e_{\ell+1}}\dots(z-\theta_n)^{e_n} 
\end{eqnarray*}
The polynomial $| E-A_I z|$ has two different degrees since each $e_k\ge 1$. This contradiction implies $z=\theta_{\ell}$ is a singular point or a zero of $\zeta_I$. 
\end{proof}
\end{coro}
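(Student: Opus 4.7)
The plan is to exploit the explicit product--exponential factorization of $\zeta_I(z)$ supplied by Theorem~\ref{any}. One direction reads off directly from that formula; the other requires recovering the coefficients $A_{\ell,j}$ in descending order of $j$ and deriving a contradiction with the given factorization of $|E-A_Iz|$.

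For the forward direction, suppose $z_0$ is a singular point or a zero of $\zeta_I$. The polynomial $Q(z)$ inside the exponential contributes nothing finite; the outer product has its factors $(1-z/\theta_k)^{-A_{k,1}/d_{N-r}}$ regular and nonzero away from $z=\theta_k$; and the inner sum inside the exponential is holomorphic away from $\{\theta_1,\ldots,\theta_n\}$. Therefore $z_0\in\{\theta_1,\ldots,\theta_n\}$, which is exactly the set of roots of $|E-A_Iz|$.

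For the reverse direction, I would argue by contrapositive. Suppose $\theta_\ell$ is a root of $|E-A_Iz|$ but $\zeta_I$ is regular and nonzero at $z=\theta_\ell$. First, the outer factor $(1-z/\theta_\ell)^{-A_{\ell,1}/d_{N-r}}$ is neither a pole nor a zero at $\theta_\ell$ only if its exponent vanishes, so $A_{\ell,1}=0$. Next, the exponential is holomorphic and nonzero at $\theta_\ell$ exactly when the principal part of its argument at $\theta_\ell$ vanishes; since $z^j/(j(1-z/\theta_\ell)^j)$ has a pole of order exactly $j$ at $\theta_\ell$ with known nonzero leading coefficient, I would proceed by descending induction. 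At $j=e_\ell-1$ only the $i=e_\ell-1$ term survives in the inner sum, forcing $A_{\ell,e_\ell}=0$. At $j=e_\ell-2$ the entire $j=e_\ell-1$ contribution is already zero and the surviving inner sum is a nonzero multiple of $A_{\ell,e_\ell-1}$, giving $A_{\ell,e_\ell-1}=0$. Continuing the descent yields $A_{\ell,2}=\cdots=A_{\ell,e_\ell}=0$.

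With every $A_{\ell,j}=0$ for $j=1,\ldots,e_\ell$, the partial fraction decomposition of $r(z)/|E-A_Iz|$ carries no $\theta_\ell$-contribution. Clearing denominators, $(z-\theta_\ell)^{e_\ell}$ becomes a common factor, so $r(z)/|E-A_Iz|$ is representable as a rational function with denominator of degree $\deg|E-A_Iz|-e_\ell$, strictly smaller than $\deg|E-A_Iz|$ because $e_\ell\ge 1$. This conflicts with the given factorization $|E-A_Iz|=d_{N-r}(z-\theta_1)^{e_1}\cdots(z-\theta_n)^{e_n}$, giving the required contradiction. The main obstacle is the clean execution of the descending induction: one must carefully track how successively killing $A_{\ell,e_\ell},A_{\ell,e_\ell-1},\ldots$ zeroes out the inner sums at orders $j=e_\ell-1,e_\ell-2,\ldots,1$, and verify that at each step the surviving contribution is a nonzero scalar multiple of the next single unknown $A_{\ell,i+1}$.
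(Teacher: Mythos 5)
Your proposal follows essentially the same route as the paper's own proof: the forward direction is read off from the explicit formula of Theorem \ref{any}, and the reverse direction proceeds by first forcing $A_{\ell,1}=0$ from the exponent of the outer product, then killing $A_{\ell,e_\ell},A_{\ell,e_\ell-1},\dots,A_{\ell,2}$ by the same descending induction on $j$ starting at $j=e_\ell-1$, and finally deriving a contradiction with the stated factorization of $|E-A_Iz|$ once the $\theta_\ell$-part of the partial fraction decomposition vanishes. The argument, including the somewhat terse final contradiction step, matches the paper's.
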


\begin{them}\label{noguchi}
Suppose $I$ has series Euler characteristic and $$\deg(|E-A_Iz|)=N-r$$  and $$\deg(\mathrm{sum}(\mathrm{adj}(E-A_Iz)))=N-1-s$$ and the polynomial $|E-A_Iz|$ is factored to the following form
$$|E-A_Iz|=d_{N-r}(z-\theta_1)^{e_1}\dots(z-\theta_n)^{e_n}$$ where each $e_i\ge 1$ and $\theta_i\not = \theta_j$ if $i\not = j$. Then the rational function $$\frac{\mathrm{sum}(\mathrm{adj}(E-A_I z)A_I)}{ |E-A_I z|}$$ has a partial fraction decomposition to the following form 
$$\frac{\mathrm{sum}(\mathrm{adj}(E-A_I z)A_I)}{ |E-A_I z|}=\frac{1}{d_{N-r} }\sum^n_{k=1}\sum^{e_k}_{j=1}\frac{A_{k,j}}{(z-\theta_k)^j}$$ 
for some complex numbers $A_{k,j}$.
Moreover, 
\begin{enumerate}
\item \label{C1} the zeta function of $I$ is 
\begin{multline*}
\zeta_I(z)=\prod^n_{k=1}\frac{1}{(1-\frac{1}{\theta_k}z)^{-\frac{A_{k,1}}{d_{N-r}}}}  \\ \times \exp\left( \frac{1}{d_{N-r}}  \sum^n_{k=1} \sum^{e_k -1}_{j=1} \frac{z^j}{j(1-\frac{1}{\theta_k}z)^j}\bigg( \sum_{i=j}^{e_k-1}  \binom{i-1}{j-1} (-1)^{i-1} \bigg(\frac{1}{\theta_k}\bigg)^{i+j} A_{k,i+1}\bigg)\right)
\end{multline*}
\item the sum of all the indexes are the number of objects of $I$, that is, $$\displaystyle \sum^n_{k=1}-\frac{A_{k,1}}{d_{N-r}}=N$$ \label{C2}
\item \label{C3} Each $\frac{1}{\theta_k}$ is an eigen value of $A_I$. In particular, $\frac{1}{\theta_k}$ is an algebraic integer
\item \label{C4} 
\begin{multline}\label{main}
\displaystyle \sum^n_{k=1}\frac{-\frac{A_{k,1}}{d_{N-r} } }{\frac{1}{\theta_k}}\\
+ \frac{1}{d_{N-r}} \sum^n_{k=1} \sum ^{e_k-1}_{j=1} (-1)^j\frac{ \sum_{i=j}^{e_k-1}  \binom{i-1}{j-1} (-1)^{i-1} \bigg(\frac{1}{\theta_k}\bigg)^{i+j} A_{k,i+1}}{\left(\frac{1}{\theta_k}\right)^{j+1}}=\chi_{\Sigma}(I).
\end{multline}
\end{enumerate}
\end{them}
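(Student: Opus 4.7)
My plan is to attack the four clauses in order, leveraging the lemmas of Section~\ref{preparation} and the already-proved Theorem~\ref{any}. First, by Lemma~\ref{deg-lemma} the numerator has degree $N-r-1<N-r=\deg|E-A_Iz|$, so the rational function is proper and a partial fraction decomposition of the stated form exists. Part~(\ref{C1}) then follows immediately from Theorem~\ref{any} with $q(z)=0$ and $Q(z)=0$. For part~(\ref{C2}), I would multiply the partial fraction identity through by $|E-A_Iz|$ and compare the coefficients of $z^{N-r-1}$: Lemma~\ref{mlemma} gives $m_{N-1-r}=-Nd_{N-r}$ on the left, while on the right only the $j=1$ residues contribute to the top degree, giving $\sum_k A_{k,1}$. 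Dividing by $-d_{N-r}$ yields the claim. Part~(\ref{C3}) is a short calculation: since $d_0=1\neq 0$, no $\theta_k$ can vanish, and the rescaling $|E-A_Iz|=(-z)^N|A_I-(1/z)E|$ evaluated at $z=\theta_k$ forces $1/\theta_k$ to be an eigenvalue of the integer matrix $A_I$, hence an algebraic integer.

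The substantive content is part~(\ref{C4}). My strategy is to recast $\chi_\Sigma(I)$ as an asymptotic coefficient of the rational function $L(z):=\mathrm{sum}(\mathrm{adj}(E-A_Iz)A_I)/|E-A_Iz|$. Writing $f(z):=\mathrm{sum}(\mathrm{adj}(E-A_Iz))/|E-A_Iz|$, the identity from Lemma~2.2 of \cite{NogB} cited in the proof of Lemma~\ref{deg-lemma} gives $f(z)=N+zL(z)$. The substitution $w=1/z$ together with the scaling laws for $\det$ and $\mathrm{adj}$ gives $G(w):=\mathrm{sum}(\mathrm{adj}(A_I-Ew))/|A_I-Ew|=-(1/w)f(1/w)$, and the formula $\chi_\Sigma(I)=G(0)$ from the proof of Lemma~\ref{classify} then produces the key identity
$$\chi_\Sigma(I)=-\lim_{z\to\infty} zf(z)=-\lim_{z\to\infty}\bigl(Nz+z^2L(z)\bigr).$$

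To evaluate this limit, I substitute the partial fraction expansion of $L(z)$ and expand each $z^2/(z-\theta_k)^j$ about $z=\infty$ via $z=(z-\theta_k)+\theta_k$. Only $j=1$ and $j=2$ contribute nonvanishing limits, giving $z+\theta_k+O(1/z)$ and $1+O(1/z)$ respectively; the cases $j\ge 3$ are $O(1/z)$. The linearly divergent part of $z^2L(z)$ cancels the $Nz$ thanks to part~(\ref{C2}), and summing the constant remainders yields
$$\chi_\Sigma(I)=-\frac{1}{d_{N-r}}\Bigl(\sum_k A_{k,1}\theta_k+\sum_k A_{k,2}\Bigr).$$

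To finish I would match this with the left-hand side of (\ref{main}). The first summand of (\ref{main}) simplifies at once to $-\sum_k A_{k,1}\theta_k/d_{N-r}$. For the double sum I would exchange the orders of the $i$- and $j$-summations; tracking the $\theta_k$-powers collapses everything onto the inner alternating sum $\sum_{j=1}^{i}(-1)^j\binom{i-1}{j-1}=-(1-1)^{i-1}$, which vanishes for $i\ge 2$ and equals $-1$ at $i=1$. Only the $i=1$ terms survive, producing exactly $-\sum_k A_{k,2}/d_{N-r}$, so the two contributions combine to the desired expression. The main obstacle is the identity $\chi_\Sigma(I)=-\lim_{z\to\infty}zf(z)$, which bridges the definition in Lemma~\ref{classify} with the asymptotic behaviour of the logarithmic derivative of $\zeta_I$; once that is in place, parts~(\ref{C2})--(\ref{C4}) reduce to coefficient comparisons and the binomial collapse above.
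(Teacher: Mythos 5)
Your proposal is correct, and for the set-up and parts (\ref{C1})--(\ref{C3}) it coincides with the paper: existence of the decomposition via Lemma~\ref{deg-lemma}, part~(\ref{C1}) from Theorem~\ref{any} with $Q(z)=0$, part~(\ref{C2}) from the coefficient of $z^{N-1-r}$ together with $m_{N-1-r}=-Nd_{N-r}$ from Lemma~\ref{mlemma}, and part~(\ref{C3}) by the rescaling argument. Your reduction of the left-hand side of \eqref{main} to $\frac{1}{d_{N-r}}\sum_k(-\theta_kA_{k,1}-A_{k,2})$ via the exchange of summations and the collapse $\sum_{j=1}^{i}(-1)^j\binom{i-1}{j-1}=-(1-1)^{i-1}$ is also exactly the paper's computation. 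Where you genuinely diverge is in showing that this quantity equals $\chi_\Sigma(I)$. The paper does this by a second coefficient comparison: it reads off $m_{N-2-r}$ from the common-denominator expansion, relates $\sum_k\theta_ke_k$ to $d_{N-1-r}/d_{N-r}$, and then splits into the two cases $s>r$ and $s=r$ using Lemma~\ref{mlemma} and Lemma~\ref{classify}. You instead use the identity $f(z)=N+zL(z)$ coming from Lemma~2.2 of [NogB] and the observation that $\chi_\Sigma(I)$ is the value at $w=0$ of $G(w)=-(1/w)f(1/w)$, so that $\chi_\Sigma(I)=-\lim_{z\to\infty}(Nz+z^2L(z))$; expanding $z^2/(z-\theta_k)^j$ at infinity, the divergent part cancels by part~(\ref{C2}) and the surviving constant is precisely $-\frac{1}{d_{N-r}}(\sum_kA_{k,1}\theta_k+\sum_kA_{k,2})$. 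This buys you a uniform treatment that avoids both the explicit formula for $m_{N-2-r}$ and the case distinction $s>r$ versus $s=r$ (the distinction is absorbed into whether the limit $G(0)$ is zero or not), at the cost of having to justify the change of variable between $f$ and $G$ --- but that justification is already implicit in the proofs of Lemmas~\ref{d-lemma} and \ref{k-lemma}, so the argument is complete as stated.
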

We give a simple interpretation of the part \ref{C4}. Put $\alpha_k=\frac{1}{\theta_k}$, $\beta_{k,0}=-\frac{A_{k,1}}{d_{N-r}}$ and 
$$\beta_{k,j}=\sum_{i=j}^{e_k-1}  \binom{i-1}{j-1} (-1)^{i-1} \bigg(\frac{1}{\theta_k}\bigg)^{i+j} A_{k,i+1}.$$
Then, the equation \eqref{main} is 
$$\sum_{k=1}^n \sum_{j=0}^{e_k -1}(-1)^j \frac{\beta_{k,j}}{\alpha_k^{j+1}}=\chi_{\Sigma}(I).$$
This theorem claims that this alternating sum is always a rational number and it is the series Euler characteristic $\chi_{\sum}(I)$ of $I$.

\begin{proof}[Proof of Theorem \ref{noguchi}] 
Lemma \ref{deg-lemma} implies
$$\deg(\mathrm{sum}(\mathrm{adj}(E-A_Iz)A_I))<\deg(|E-A_Iz|).$$Hence, we have a partial fraction decomposition
$$\frac{\mathrm{sum}(\mathrm{adj}(E-A_I z)A_I)}{ |E-A_I z|}=\frac{1}{d_{N-r} }\sum^n_{k=1}\sum^{e_k}_{j=1}\frac{A_{k,j}}{(z-\theta_k)^j}$$ for some complex numbers $A_{k,j}$.

The part \ref{C1} is directly implied by Theorem \ref{any} as $Q(z)=0$.

Next we show the part \ref{C2}. We observe the numerators of both sides $$\frac{\mathrm{sum}(\mathrm{adj}(E-A_I z)A_I)}{|E-A_I z|}=\frac{1}{d_{N-r}} \sum^n_{k=1}\sum^{e_k}_{j=1}\frac{A_{k,j}}{(z-\theta_k)^j}.$$
For the right hand side, when it is transformed to the left hand side by a reduction to common denominator, the coefficient of $z^{N-1-r}$ of the numerator is $\sum_{k=1}^n A_{k,1}$. Lemma \ref{mlemma} implies  $\sum_{k=1}^n A_{k,1}=m_{N-1-r}=d_{N-r}$. Thus, we obtain 
$$\sum^n_{k=1}-\frac{A_{k,1}}{d_{N-r}}=N.$$

 We show the part \ref{C3}. Since each $\theta_k$ is a root of the polynomial $|E-A_I z|$, we obtain
 \begin{eqnarray*}
 |E-A_I\theta_k |&=&0 \\
 (\theta_k)^N \bigg| E\frac{1}{\theta_k}-A_I \bigg |&=&0.
 \end{eqnarray*}
 Hence, $\frac{1}{\theta_k}$ is an eigen value of $A_I$. Note that $\theta_k\not =0$. Moreover, since $|E\lambda-A_I|$ is a monic polynomial with coefficients in $\mathbb{Z}$, $\frac{1}{\theta_k}$ is an algebraic integer.

 Finally, we show the part \ref{C4}. The equation \eqref{main} is
\begin{eqnarray*}
\eqref{main}&=&\displaystyle \sum^n_{k=1}\frac{-\frac{A_{k,1}}{d_{N-r}}}{\frac{1}{\theta_k}} \\
&&+ \frac{1}{d_{N-r}} \sum^n_{k=1} \sum ^{e_k-1}_{j=1} (-1)^j\frac{ \sum_{i=j}^{e_k-1}  \binom{i-1}{j-1} (-1)^{i-1} \bigg(\frac{1}{\theta_k}\bigg)^{i+j} A_{k,i+1}}{\left(\frac{1}{\theta_k}\right)^{j+1}} \\
&=&\sum^n_{k=1}\bigg(-\frac{\theta_k A_{k,1}}{d_{N-r}} \\
&&+\frac{1}{d_{N-r}} \sum^{e_k-1}_{j=1}\sum^{e_k-1}_{i=j} (-1)^{j}\bigg(-\frac{1}{\theta_k}\bigg)^{i-1}\binom{i-1}{j-1}A_{k,i+1} \bigg) \\
&=&\sum^n_{k=1}\bigg(-\frac{\theta_k A_{k,1}}{d_{N-r}} \\
&&+\frac{1}{d_{N-r}} \sum^{e_k-1}_{i=1} \bigg(-\frac{1}{\theta_k}\bigg)^{i-1}A_{k,i+1}\bigg(\sum^i_{j=1}(-1)^j\binom{i-1}{j-1}\bigg)  \bigg)\\
&=&\frac{1}{d_{N-r}} \bigg( \sum^n_{k=1}(-\theta_k A_{k,1}-A_{k,2})\bigg).
\end{eqnarray*}
 So it is enough to show 
\begin{eqnarray}
\frac{1}{d_{N-r}}\bigg( \sum^n_{k=1}-\theta_kA_{k,1}-A_{k,2}\bigg)=\chi_{\sum}(I).\label{main2}
\end{eqnarray}
By comparison of the numerators of both sides $$\frac{\mathrm{sum}(\mathrm{adj}(E-A_I z)A_I)}{|E-A_I z|}=\frac{1}{d_{N-r}} \sum^n_{k=1}\sum^{e_k}_{j=1}\frac{A_{k,j}}{(z-\theta_k)^j},$$
we have
$$m_{N-2-r}=\sum^n_{k=1} A_{k,2}-\sum^n_{k=1}A_{k,1}(\theta_1 e_1+\dots+\theta_k(e_k-1)+\dots \theta_n e_n).$$ Hence, the left hand side of \eqref{main2} is
\begin{eqnarray}
\frac{1}{d_{N-r}}\bigg( \sum^n_{k=1}-\theta_kA_{k,1}-A_{k,2}\bigg)&=&\frac{1}{d_{N-r}}\bigg( \sum^n_{k=1}-\theta_k A_{k,1}  -m_{N-2-r} \nonumber \\ 
&&-A_{k,1}(\theta_1 e_1+\theta_k(e_k-1)+\dots+\theta_n e_n)\bigg) \nonumber \\
&=&\frac{1}{d_{N-r}}\bigg(-\bigg(\sum^n_{k=1}A_{k,1}\bigg)\bigg(\sum^n_{k=1}\theta_k e_k\bigg) \nonumber \\
&&-m_{N-2-r}\bigg). \label{main3}
\end{eqnarray}
We have 
\begin{eqnarray*}
|E-A_Iz|&=&d_0+d_1z+\dots +d_{N-r}z^{N-r}\\
&=&d_{N-r}(z-\theta_1)^{e_1}\dots (z-\theta_n)^{e_n} \\
&=&d_{N-r}\bigg(z^{N-r}-\bigg(\sum^n_{k=1}\theta_k e_k\bigg)z^{N-1-r}+\dots \bigg).
\end{eqnarray*}
Hence, we obtain $-d_{N-r}(\sum^n_{k=1}\theta_k e_k)=d_{N-1-r}$, so that
$$-\sum^n_{k=1}\theta_k e_k=\frac{d_{N-1-r}}{d_{N-r}}.$$
We have already seen $\sum^n_{k=1} A_{k,1}=-N d_{N-r}$. Therefore, the equation \eqref{main3} is
\begin{multline}
\frac{1}{d_{N-r}}\bigg(-\bigg(\sum^n_{k=1}A_{k,1}\bigg)\bigg(\sum^n_{k=1}\theta_k e_k\bigg)-m_{N-2-r}\bigg)=\\ \frac{1}{d_{N-r}}\bigg( -Nd_{N-1-r}-m_{N-2-r}\bigg).\label{main4}
\end{multline}

Here we have to consider two cases $$\chi_{\Sigma}(I)=\begin{cases}0 &\text{if } s>r \\ -\frac{k_{N-1-s}}{d_{N-r}}&\text{if } s=r \end{cases}$$
(see Lemma \ref{classify}).

If $s>r$, Lemma \ref{mlemma} implies $m_{N-2-r}=-Nd_{N-1-r}$, so that the equation \eqref{main4} is
\begin{eqnarray*}
\frac{1}{d_{N-r}}\bigg( -Nd_{N-1-r}-m_{N-2-r}\bigg)&=&\frac{1}{d_{N-r}}\bigg( -Nd_{N-1-r}+Nd_{N-1-r}\bigg)\\
&=&0\\
&=&\chi_{\sum}(I).
\end{eqnarray*}
If $r=s$, Lemma \ref{classify} implies $m_{N-r-2}=k_{N-1-r}-Nd_{N-1-r}$. Hence, the equation \eqref{main4} is
\begin{eqnarray*}
\frac{1}{d_{N-r}}\bigg( -Nd_{N-1-r}-m_{N-2-r}\bigg)&=&\frac{1}{d_{N-r}}\bigg( -Nd_{N-1-r}-k_{N-1-r} +Nd_{N-1-r}\bigg)\\
&=& -\frac{k_{N-1-r}}{d_{N-r}}\\
&=&\chi_{\sum}(I).
\end{eqnarray*}
Hence, we obtain the results.
\end{proof}


\begin{thebibliography}{AAA99}

\bibitem [BL08]{Leib}  C. Berger and T. Leinster. The Euler characteristic of a category as the sum of a divergent series, \textit{Homology, Homotopy Appl., }10(1):41-51, 2008.



\bibitem[FLS11]{FLS} T. M. Fiore, W. L\"{u}ck and R. Sauer. Finiteness obstructions and Euler characteristics of categories, \textit{Adv. Math}, Vol. 226, Number 3, (2011), 2371--2469.



\bibitem [Lei08]{Leia}    T. Leinster. The Euler characteristic of a category,\textit{ Doc. Math.}, 13:21-49, 2008, arXiv:math.CT/0610260

\bibitem [Lei]{Leic}    T. Leinster. Notions of M\"obius inversion. arXiv:1201.0413




\bibitem [Nog11]{Nog11} K. Noguchi. The Euler characteristic of acyclic categories. Kyushu Journal of Mathematics, vol. 65 No.1 (2011), 85-99.

\bibitem [Nog]{Nog} K. Noguchi. Euler characteristics of categories and barycentric subdivision. arXiv:1104.3630

\bibitem [NogA]{NogA} K. Noguchi. The zeta function of a finite category. arXiv:1203.6133

\bibitem [NogB]{NogB} K. Noguchi. The zeta function of a finite category which has M\"obius inversion. 	arXiv:1205.4380v2



\end{thebibliography}
\end{document}